\DeclareMathOperator{\Ext}{Ext}
\DeclareMathOperator{\val}{val}
\newcommand{\bdy}{\ensuremath{\partial}}
\newcommand{\iso}{\ensuremath{\cong}}
\newcommand{\Z}[1][]{\ensuremath{\mathbb{Z}_{#1}}}
\newcommand{\F}{\ensuremath{\mathbb{F}}}
\newcommand{\Zpof}[1]{{\ensuremath{\Z[p][\![#1]\!]}}}
\newcommand{\Fpof}[1]{{\ensuremath{\F_p[\![#1]\!]}}}
\newcommand{\lqt}{\backslash}
\newtheorem{theorem}{Theorem}[section]
\newtheorem{prop}[theorem]{Proposition}
\newtheorem{lem}[theorem]{Lemma}
\theoremstyle{definition}
\newtheorem{defn}[theorem]{Definition}
\newtheorem*{cnv}{Conventions}
\theoremstyle{remark}
\newtheorem*{rmk}{Remark}
\theoremstyle{plain}
\newcounter{introthmcount}
\theoremstyle{definition}
\title{A sufficient condition for accessibility of pro-$p$ groups}
\author{Gareth Wilkes}
\begin{document}
\maketitle
\begin{abstract}
We establish a sufficient condition for a finitely generated pro-$p$ group to be accessible in terms of finite generation of the module of ends.
\end{abstract}
\section{Introduction}
One of the key themes in mathematics is the study of an object by breaking it down into `simpler' pieces which may be more amenable to study. In geometric group theory, the common exemplar of this theme is the notion of a {\em graph of groups decomposition} of a group. A necessary part of the power of such a theory is some statement that the pieces into which one decomposes a group really are `simpler'---often realised as a statement that the process of decomposing a group eventually terminates in something that cannot be split further. For graph of groups decompositions, this is expressed via the notion of {\em accessibility}. 

\begin{defn}
Let $G$ be a discrete group. We say $G$ is {\em accessible} if there is a number $n = n(G)$ such that any finite, reduced graph of discrete groups ${\cal G}=(X,G_\bullet)$ with finite edge groups having fundamental group isomorphic to $G$ has at most $|EX| \leq n$ edges.
\end{defn}
The precise definition and notation of a graph of groups decomposition  for discrete groups will not be given here. The definitions for pro-$p$ groups will be given later in Section \ref{sec:AccessBg}.

A torsion free finitely generated group is accessible by Grushko's Theorem. Other important classes of groups known to be accessible are finitely generated groups with a bound on the size of their finite subgroups \cite{Linnell83} and all finitely presented groups \cite{Dunwoody85}. It is known however that not all finitely generated groups are accessible \cite{Dunwoody93}.

Pro-$p$ groups have a theory of graph of groups decomposition which has many features in common with that for discrete groups. In this context it is perhaps natural to define and study accessibility for pro-$p$ groups. The author began this study in \cite{Wilkes19}, and established that---as just seen for discrete groups---not all finitely generated pro-$p$ groups are accessible, but that those with a bound on the size of their finite subgroups are.

We currently lack criteria to establish accessiblity of pro-$p$ groups. In particular it is open whether finitely presented pro-$p$ groups are accessible. In this paper we show a sufficient condition for accessibility in terms of the `module of ends' of an infinite pro-$p$ group as introduced by Korenev \cite{Korenev04}. This criterion parallels that for discrete groups established by Dunwoody \cite{Dunwoody79}. 
\begin{theorem}
Let $G$ be a finitely generated pro-$p$ group. If ${\bf H}^1(G,\Fpof{G})$ is finitely generated as a right $\Fpof{G}$-module then $G$ is accessible. 
\end{theorem}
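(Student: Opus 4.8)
The plan is to bound the number of edges of an arbitrary reduced decomposition directly by an invariant of the ends module ${\bf H}^1(G,\Fpof{G})$, namely its minimal number of generators, which is finite by hypothesis. Suppose then that $G$ is the fundamental group of a reduced finite graph of pro-$p$ groups $(X,G_\bullet)$ with finite edge groups and $n=|EX|$ edges; the vertex groups which carry the interesting part of the argument may be assumed infinite, as finite vertex groups contribute only in a controlled way. Let $T$ be the associated pro-$p$ tree. First I would use the fact that, $T$ being a pro-$p$ tree, its augmented chain complex is a short exact sequence of profinite $\Fpof{G}$-modules
\[ 0 \to \Fpof{E(T)} \xrightarrow{\ \partial\ } \Fpof{V(T)} \xrightarrow{\ \varepsilon\ } \F_p \to 0, \]
in which $\Fpof{E(T)}\iso\bigoplus_{i=1}^{n}\bind_{G_{e_i}}^{G}\F_p$ and $\Fpof{V(T)}\iso\bigoplus_{j}\bind_{G_{v_j}}^{G}\F_p$, the sums running over edge and vertex orbits.

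Next I would apply the derived functors $\bExt^{*}_{\Fpof{G}}(-,\Fpof{G})$ to this sequence. By the completed form of Shapiro's lemma, $\bExt^{*}_{\Fpof{G}}(\bind_{H}^{G}\F_p,\Fpof{G})\iso{\bf H}^{*}(H,\Fpof{G})$ for each edge or vertex group $H$. The computational heart of this step is the vanishing $\Fpof{G}^{\,H}=0$ for every infinite closed subgroup $H$: a nonzero invariant would project to norm-type elements at each finite quotient whose coefficients are divisible by $p$ as soon as $H$ meets a sufficiently small open normal subgroup, forcing the element to be zero. Consequently the degree-zero vertex term $\bExt^{0}(\Fpof{V(T)},\Fpof{G})=\bigoplus_j \Fpof{G}^{\,G_{v_j}}$ vanishes, and the long exact sequence produces an injection of right $\Fpof{G}$-modules
\[ \bigoplus_{i=1}^{n}\Fpof{G}^{\,G_{e_i}}\hookrightarrow{\bf H}^{1}(G,\Fpof{G}). \]
Each summand is nonzero, and when $G_{e_i}$ is trivial it is a free module of rank one; thus in the torsion-free case the ends module contains a free submodule of rank $n$.

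It then remains to convert this growing submodule into a bound on $n$. Since $G$ is pro-$p$, the ring $\Fpof{G}$ is local with maximal ideal the augmentation ideal $\mathfrak m$, and a finitely generated profinite $\Fpof{G}$-module $M$ has minimal number of generators equal to $\dim_{\F_p}(M/M\mathfrak m)$. Writing $d$ for this dimension applied to ${\bf H}^{1}(G,\Fpof{G})$, which is finite by hypothesis, the aim is to show that the $n$ edge summands remain linearly independent in the Frattini quotient, so that $n\le d$ and $G$ is accessible with accessibility constant $d$. Here the reducedness hypothesis is essential: a non-reduced edge, whose group equals an adjacent vertex group, can be collapsed, and subdividing edges would otherwise manufacture arbitrarily many dependent classes.

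The hard part is exactly this final independence statement, for two reasons. First, the functor $-\otimes_{\Fpof{G}}\F_p$ is only right exact, so the injection of the edge module need not stay injective modulo $\mathfrak m$; the defect is governed by $\bTor^{\Fpof{G}}_{1}$ of the quotient and must be killed using the geometry of the reduced tree, for example by realising the edge module as a direct summand or by constructing explicit functionals separating the edge orbits. Second, torsion edge groups are genuinely more delicate: when $G_{e_i}$ is a nontrivial finite $p$-group the summand $\Fpof{G}^{\,G_{e_i}}$ is cyclic on the norm element $N_{G_{e_i}}$, which already lies in $\mathfrak m$, so detecting its class in the Frattini quotient of ${\bf H}^{1}(G,\Fpof{G})$ requires more than the torsion-free bookkeeping. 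Establishing the independence of the edge classes in this completed, generally non-Noetherian setting, and handling torsion edge groups, is where I expect the real work of the proof to lie.
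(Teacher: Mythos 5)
Your setup is sound and reproduces the paper's preliminary computations: the degree-zero Mayer--Vietoris terms, the vanishing of ${}^{H}\Fpof{G}$ for infinite $H$, and the identification of the edge contributions as norm modules $N_{G_e}\Fpof{G}$ all appear in the paper. But the step you correctly flag as ``the hard part'' is a genuine gap, and moreover it is the wrong mechanism: an \emph{injection} $\bigoplus_{i=1}^{n} N_{G_{e_i}}\Fpof{G}\hookrightarrow {\bf H}^1(G,\Fpof{G})$ cannot bound $n$ by the number of generators $d$ of the ends module, because over the non-Noetherian local ring $\Fpof{G}$ finite generation controls quotients (via Nakayama), not submodules --- a finitely generated module can contain direct sums of arbitrarily many nonzero submodules, and as you yourself observe, when $G_e\neq 1$ the generator $N_{G_e}$ already lies in the maximal ideal, so independence in the Frattini quotient is not available. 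Your reduction ``finite vertex groups contribute only in a controlled way'' is a second unproved claim: when vertex groups are finite the degree-zero vertex terms do not vanish and your injection disappears entirely; this is precisely the virtually free case, whose accessibility is itself a nontrivial theorem. Note also that your target inequality $n\leq d$ is stronger than what the paper proves --- its bound is $|EX|\leq 2d + 9(b_1(G)-1)$, with a correction term in $b_1(G)$ --- which should make you suspicious that edge-by-edge independence can be extracted at all.

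The paper's proof (Theorem \ref{thm:main}) turns the homological information the other way, into a \emph{surjection}, where locality does give a count: a $d$-generated module over the local ring $\Fpof{G}$ cannot surject onto a direct sum of more than $d$ nontrivial modules (Lemma \ref{lem:directsumslocalring}). Concretely, one collapses a maximal matching $M$ of $X$; each collapsed edge produces a vertex group with a one-edge splitting over a finite group, hence with ${\bf H}^1(G'_v,\Fpof{G'_v})\neq 0$ by Proposition \ref{prop_more} (this is where your degree-zero computation is actually used --- to show multi-endedness of the new vertex groups, not to inject edge terms). Since the edge groups are finite, their ${\bf H}^1$ with $\Fpof{G}$-coefficients vanishes, so Mayer--Vietoris in degree one gives a surjection ${\bf H}^1(G,\Fpof{G})\twoheadrightarrow \bigoplus_{v} {\bf H}^1(G'_v,\Fpof{G})$ with at least $|M|$ nonzero summands, whence $M(X)\leq d$. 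This only bounds a matching, not $|EX|$ --- chains of one-ended vertex groups and leaves are invisible to it --- so the proof is completed by purely combinatorial bookkeeping: leaves and loops are controlled by $b_1(G)\geq T(X)+1$ via an explicit quotient of $G$ onto a free product of $\F_p$'s and a free pro-$p$ group, and the graph-theoretic Lemma \ref{lem_counting} gives $|EX|\leq 2M(X)+9T(X)$. The missing ideas in your proposal are thus the matching-and-collapse trick together with the surjection form of the local-ring lemma; pursuing independence of submodule classes modulo $\mathfrak{m}$ would, at best, require overcoming obstructions the paper never needs to face.
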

For the proof, and a more precise statement including an explicit bound for $n(G)$, see Theorem \ref{thm:main}.
\begin{rmk}
While this paper was in preparation, the author learned that this theorem has been independently established by Chatzidakis and Zalesskii \cite{CZ20}. The author is grateful for advance sight of their paper.
\end{rmk}

{\noindent\bf Acknowledgements} The author was supported by a Junior Research Fellowship from Clare College, Cambridge.
\section{Background}
\subsection{The module of ends}

Let $G$ be a pro-$p$ group and let $M$ be a profinite left $G$-module: a profinite abelian group with a continuous $G$-action. We may define the continuous cohomology of $G$ with coefficients in $M$ via the usual formulae: equipping the spaces $C^n(G,M)$ of continuous functions $G^n\to M$ with coboundary operators $C^n(G,M)\to C^{n+1}(G,M)$ and studying images and kernels.

This is a valid definition, but has a problem in the greatest generality: it is not continuous with respect to inverse limits of modules. That is, the groups
\[H^n(G,\varprojlim M_i)\text{ and } \varprojlim H^n(G,M_i)\]
may not be isomorphic in general for an inverse system of profinite modules $(M_i)$.

The problem is that the natural domain of definition of cohomology for profinite groups is with coefficients in discrete torsion modules rather than profinite modules. The addition of a certain finiteness condition FP${}_n$ allows one to define a fully functional theory of cohomology ${\bf H}^k(G,M)$ with coefficients in profinite modules for $k\leq n$, which has the required continuity property
\[{\bf H}^n(G,\varprojlim M_i)\iso \varprojlim {\bf H}^n(G,M_i)\] and agrees with the previous definition where defined. For the purposes of this paper it is only necessary for us to know that finitely generated pro-$p$ groups have property FP${}_1$.

The reader is referred to \cite{SW00} for a full discussion of these matters. We follow \cite{SW00} in using the notation ${\bf H}^k(G,M)$ to emphasize the continuity of the relevant functors.

\begin{defn}
Let $G$ be a finitely generated pro-$p$ group. The {\em module of ends} of $G$ is ${\bf H}^1(G,\Fpof{G})$, where $\Fpof{G}$ is the completed group ring of $G$ over $\F_p$. 
\end{defn}
We refer the reader to \cite[Chapter 5]{RZ00} for definitions and material concerning the completed group ring $\Fpof{G}$ and other profinite modules. 

This definition is ultimately motivated by the classical connection between the topological notion of ends of a discrete group $G$ and the cohomology group $H^1(G,\F_2 G)$ (see, for example, \cite{Stallings68}). For pro-$p$ groups the definition is first found in \cite{Korenev04}, who credits the suggestion to Mel'nikov. The {\em number of ends} of the (finitely generated) pro-$p$ group $G$ is defined to be
\[e(G) = 1-\dim_{\F_p} {\bf H}^0(G,\Fpof{G}) + \dim_{\F_p} {\bf H}^1(G,\Fpof{G}).\]
The second term is either 0 or 1 depending whether $G$ is infinite or finite. The number of ends $e(G)$ is either 0, 1, 2 or infinity \cite[Theorem 1]{Korenev04}. The statement `$G$ has more than one end' is equivalent to `$G$ is infinite and ${\bf H}^1(G,\Fpof{G})$ is non-zero'.

Since $\Fpof{G}$ is a $(G,G)$-bimodule, the group ${\bf H}^1(G,\Fpof{G})$ acquires the structure of a right $\Fpof{G}$-module. Since $G$ is a pro-$p$ group, $\F_p$ is the only simple $G$-module \cite[Lemma 7.1.5]{RZ00}. It follows that $\Fpof{G}$ is a local ring, and that every non-trivial $G$-module surjects $\F_p$. A consequence that will be important for us is recorded below.
\begin{lem}\label{lem:directsumslocalring}
Let $G$ be a pro-$p$ group and let $M$ be a $G$-module generated by $n$ elements. Then $M$ cannot surject a direct sum of more than $n$ non-trivial $G$-modules.
\end{lem}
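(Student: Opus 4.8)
The plan is to reduce everything to the trivial module $\F_p$. Suppose, for contradiction, that $M$ surjects onto a direct sum $\bigoplus_{i\in I} N_i$ of non-trivial $G$-modules with $|I| > n$. Since projecting a direct sum onto the sub-sum indexed by any finite subset $J\subseteq I$ is a surjection, composing shows that $M$ also surjects onto $\bigoplus_{i\in J} N_i$; hence it suffices to derive a contradiction in the case $I=\{1,\dots,m\}$ with $m=n+1$.

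Next I would exploit the two facts recorded just before the statement: $\F_p$ is the only simple $G$-module, and every non-trivial $G$-module surjects onto $\F_p$. Thus for each $i$ there is a continuous surjection $\pi_i\colon N_i\twoheadrightarrow \F_p$ onto the trivial module. Taking the direct sum of the $\pi_i$ produces a surjection $\bigoplus_{i} N_i\twoheadrightarrow \F_p^{\,m}$, where $\F_p^{\,m}$ carries the trivial $G$-action, and composing with the assumed surjection $M\twoheadrightarrow\bigoplus_i N_i$ yields a surjection of $G$-modules $M\twoheadrightarrow \F_p^{\,m}$.

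Finally I would count generators. A (topological) quotient of a module generated by $n$ elements is again generated by $n$ elements, so $\F_p^{\,m}$ is generated by $n$ elements as a $G$-module. But on $\F_p^{\,m}$ the $G$-action is trivial, so the $\Fpof{G}$-module structure factors through the augmentation $\Fpof{G}\to\F_p$; consequently the closed $\Fpof{G}$-submodule generated by $n$ elements is just their $\F_p$-span, and generation by $n$ elements forces $\dim_{\F_p}\F_p^{\,m}=m\le n$. This contradicts $m=n+1$ and completes the argument.

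The step that needs the most care is the topological bookkeeping in the profinite category: one must check that the direct sum of the continuous surjections $\pi_i$ is again a continuous surjection of profinite $G$-modules (this is immediate for finitely many summands, where the direct sum coincides with the direct product) and that "generated by $n$ elements" is genuinely inherited by continuous quotients. A cleaner alternative, which avoids the preliminary reduction to finite index sets altogether, is to invoke the topological Nakayama lemma: writing $I_G$ for the augmentation ideal of the local ring $\Fpof{G}$, the minimal number of generators of a profinite module $N$ equals $\dim_{\F_p} N/I_G N$; each non-trivial $N_i$ then contributes $\dim_{\F_p} N_i/I_G N_i\ge 1$, and additivity of this invariant over the direct sum, together with the bound coming from the surjection out of the $n$-generated module $M$, gives $|I|\le n$ directly.
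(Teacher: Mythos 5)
Your proof is correct and is essentially the argument the paper leaves implicit: the lemma is stated there as an immediate consequence of the facts that $\Fpof{G}$ is local and that every non-trivial $G$-module surjects onto $\F_p$, which is exactly your reduction of each summand to an $\F_p$-quotient followed by counting generators of the trivial module $\F_p^{\,m}$ (equivalently, your Nakayama formulation via $\dim_{\F_p} N/I_GN$). Both your main route and the Nakayama alternative are sound, including the reduction to finitely many summands.
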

We will denote by $\dim_{G\text{-}{\sf Mod}}(M)$, respectively $\dim_{{\sf Mod}\text{-}G}(M)$, the minimal size of a generating set of a left, respectively right, $G$-module.
\begin{cnv}
Cohomology theory routinely uses the notation $M^G$ to denote the invariants of a left $G$-module $M$, viz.
\[H^0(G,M) = M^G.\]
In the present paper it is central to the discussion that $M$ is a $(G,G)$-bimodule, and it will be inappropriate to use a notation $M^G$ for the left-invariants of $M$. Instead we will use a left superscript ${}^G\!M$.
\end{cnv}
\subsection{Accessibility}\label{sec:AccessBg}
Here we record the necessary definitions concerning graphs of pro-$p$ groups.
In this paper an (oriented) graph $X$ will consist of a set $VX$ of {\em vertices}, a set $EX$ of {\em edges}, and two functions $d_k\colon EX\to VX$ for $k\in\{0,1\}$ describing the {\em endpoints} of $e$. 
\begin{defn}
A graph ${\cal G}=(X,G_\bullet)$ of pro-$p$ groups consists of a finite graph $X$, a pro-$p$ group $G_x$ for each $x\in X$, and monomorphisms $\bdy_{e,k}\colon G_e\to G_{d_k(e)}$ for each edge $e\in EX$ and for $k=0,1$. A graph of groups is {\em reduced} if $\bdy_{e,k}$ is never an isomorphism when $e$ is not a loop.
\end{defn}
Let ${\cal G}=(X,G_\bullet)$ be a finite graph of pro-$p$ groups. Choose a maximal subtree $T$ of $X$. We define the {\em fundamental pro-$p$ group} $\Pi_1(\cal G)$ of $\cal G$ to be the pro-$p$ group given by the pro-$p$ presentation
\begin{multline*}
\big<G_x\, (x\in X),\, t_e\, (e\in EX) \, \big| \, t_e=1 \text{ (for all }e\in ET)\\ \bdy_{e,0}(g) = g = t_e\bdy_{e,1}(g)t_e^{-1} \text{ (for all } e\in EX, g\in G_e)\big> 
\end{multline*}
The fundamental pro-$p$ group of a graph of groups always exists \cite[Proposition 6.2.1(b)]{Ribes17} and is independent of the choice of $T$ \cite[Theorem 6.2.4]{Ribes17}.

When the graph $G$ has one edge $e$ and one (or two) vertices $u$ (and $v$ respectively), we use the notation $\Pi_1({\cal G})= G_u\amalg_{G_e} (G_v)$ in analogy to classical HNN extensions (amalgamated free products).

For discrete groups, it is automatic that the fundamental group of a graph of groups contains injective copies of the groups $G_x$. For pro-$p$ groups this is no longer necessary and must be enforced as an extra property.
\begin{defn}
A graph of pro-$p$ groups ${\cal G}=(X,G_\bullet)$ is {\em proper} if the natural maps $G_x\to \Pi_1({\cal G})$ are injections for all $x\in X$.
\end{defn}
A proper, reduced graph of pro-$p$ groups whose fundamental pro-$p$ group is isomorphic to a pro-$p$ group $G$ will be referred to as a {\em splitting} of $G$, or a {\em graph of groups decomposition} of $G$.

\begin{defn}
Let $G$ be a pro-$p$ group. We say $G$ is {\em accessible} if there is a number $n=n(G)$ such that any finite, proper, reduced graph of pro-$p$ groups $\cal G$ with finite edge groups having fundamental group isomorphic to $G$ has at most $|EX|\leq n$ edges.
\end{defn}

We require two additional facts concerning graphs of pro-$p$ groups: a structure result and a Mayer-Vietoris sequence.
\begin{prop}\label{prop:virtfree}
Let ${\cal G}=(X,G_\bullet)$ be a proper finite graph of finite $p$-groups. Then $\Pi_1(\cal G)$ is virtually free pro-$p$.
\end{prop}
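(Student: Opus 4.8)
The plan is to realize $\Pi := \Pi_1({\cal G})$ through its action on the associated standard pro-$p$ tree $T$ and to exhibit an open normal subgroup that acts freely. Since $X$ is finite and each $G_x$ is a finite $p$-group, $\Pi$ is topologically finitely generated. Because ${\cal G}$ is proper, the theory of graphs of pro-$p$ groups (see \cite{Ribes17}) supplies a pro-$p$ tree $T$ on which $\Pi$ acts with quotient graph $X$, so that there are finitely many orbits of vertices and of edges, indexed by $VX$ and $EX$, with vertex stabilizers the conjugates of the $G_v$ ($v\in VX$) and edge stabilizers the conjugates of the $G_e$ ($e\in EX$). All of these stabilizers are finite $p$-groups.

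The key step is to produce an open normal subgroup $N \trianglelefteq \Pi$ meeting every stabilizer trivially. For each vertex $v$ and each nontrivial $g \in G_v$, the profiniteness of $\Pi$ furnishes an open normal subgroup not containing $g$; intersecting the finitely many such subgroups as $v$ ranges over $VX$ and $g$ over $G_v\setminus\{1\}$ yields an open normal $N$ with $N \cap G_v = 1$ for every $v \in VX$. Since each edge group $G_e$ embeds into a vertex group via some $\bdy_{e,k}$, we also have $N \cap G_e = 1$ for every $e \in EX$. Normality then propagates this to all stabilizers: for any $h \in \Pi$ we have $N \cap h G_v h^{-1} = h(N \cap G_v)h^{-1} = 1$, and likewise for the edge groups. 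Hence $N$ fixes no vertex or edge of $T$ nontrivially, i.e. $N$ acts freely on $T$.

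A pro-$p$ group acting freely on a pro-$p$ tree is free pro-$p$ \cite{Ribes17}, so $N$ is free pro-$p$; being open in the finitely generated group $\Pi$, it is moreover finitely generated, hence of finite rank. As $N$ is open, $\Pi = \Pi_1({\cal G})$ is virtually free pro-$p$, as claimed. I expect the point requiring the most care to be the precise identification of the stabilizers of the standard tree with the conjugates of the vertex and edge groups, which is exactly where properness of ${\cal G}$ is used: it guarantees both that the $G_x$ embed in $\Pi$ and that the action on $T$ produces no unexpectedly large point stabilizers. The remaining ingredient, that a free action on a pro-$p$ tree forces the acting group to be free pro-$p$, is the pro-$p$ Bass--Serre theorem and is invoked as a black box.
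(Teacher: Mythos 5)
Your proof is correct, but it takes a genuinely different route from the paper's. The paper never passes through the standard pro-$p$ tree: it observes that $\Pi_1({\cal G})$ is the pro-$p$ completion of the \emph{abstract} fundamental group $\Gamma=\pi_1({\cal G})$ of the same graph of finite $p$-groups, chooses a finite $p$-quotient $G\to P$ injective on all vertex groups (this is exactly your open normal subgroup $N$, realised as $N=\ker(G\to P)$), and then identifies $N$ with the pro-$p$ completion of $\ker(\Gamma\to P)$, which is a free \emph{discrete} group by classical Bass--Serre theory; hence $N$ is free pro-$p$ as the pro-$p$ completion of a free group. So the separation step is common to both arguments, and the difference lies entirely in how freeness of $N$ is certified. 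Your version stays internal to the pro-$p$ category: you use the standard pro-$p$ tree of \cite[Theorem 6.3.5]{Ribes17} (the same result the paper invokes for its Mayer--Vietoris sequence, with properness correctly flagged as the hypothesis making the standard graph a tree with stabilizers exactly the conjugates of the $G_x$), and then the Ribes--Zalesskii theorem that a pro-$p$ group acting freely on a pro-$p$ tree is free pro-$p$. That last ingredient is a considerably deeper black box than discrete Bass--Serre theory, so the paper's proof is the more elementary one; on the other hand, your argument avoids the completion bookkeeping implicit in the paper's proof (one must check that the kernel of $G\to P$ really is the pro-$p$ completion of $\ker(\Gamma\to P)$, i.e.\ that the pro-$p$ topology of $\Gamma$ induces the full pro-$p$ topology on this finite-index kernel), and it is the argument that more visibly uses only structures already set up in the pro-$p$ setting. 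All the individual steps you take---the finitely-many-elements separation argument, the conjugation computation $N\cap hG_vh^{-1}=h(N\cap G_v)h^{-1}$, and the reduction of edge groups to vertex groups via $\bdy_{e,k}$---are sound.
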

\begin{proof}
From the definition it is immediate that $G=\Pi_1(\cal G)$ is the pro-$p$ completion of the abstract fundamental group $\Gamma=\pi_1(\cal G)$ of the graph of discrete groups $(X,G_\bullet)$. By properness, all vertex groups inject into $G$, so there is a map $G\to P$ to a finite $p$-group such that all maps $G_x\to G\to P$ are injective. The kernel of this map is the pro-$p$ completion of the kernel of the induced map $\Gamma \to P$, which is a free group by standard Bass-Serre theory \cite{SerreTrees}.
\end{proof}
\begin{prop}[Mayer-Vietoris sequence\footnote{This would be expected to be fairly standard. It requires a proof here for two reasons: the use of the extended functors ${\bf H}^\bullet$ and because the standard reference \cite{Ribes17} deals only with {\em homological} Mayer-Vietoris sequences, these being better behaved in more complex situations when the graph of groups may not be finite.}]
Let $G$ be the fundamental pro-$p$ group of a proper finite graph of pro-$p$ groups ${\cal G} = (X,G_\bullet)$. For a finite $G$-module $M$ there is a natural long exact sequence of the form
\[\cdots H^n(G,M)\to \bigoplus_{v\in VX}H^n(G_v,M)\to \bigoplus_{e\in EX}H^n(G_e,M)\to H^{n+1}(G,M)\cdots\]
If $G$ and all $G_x$ have type FP${}_N$ then for all compact $G$-modules $M$ there is a natural exact sequence
\[\cdots {\bf H}^n(G,M)\to \bigoplus_{v\in VX}{\bf H}^n(G_v,M)\to \bigoplus_{e\in EX}{\bf H}^n(G_e,M)\to {\bf H}^{n+1}(G,M)\cdots\]
terminating with the term $\bigoplus_{v\in VX} {\bf H}^N(G_v,M)$.\end{prop}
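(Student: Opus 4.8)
The plan is to build the Mayer--Vietoris sequence from a short exact sequence of coefficient modules (or equivalently of the relevant chain/cochain complexes) arising from the graph-of-groups structure, and then pass to the long exact cohomology sequence. Concretely, the fundamental pro-$p$ group $G$ of a proper finite graph of groups acts on a pro-$p$ analogue of the Bass--Serre tree, and this action yields a short exact sequence of permutation modules
\[0 \to \bigoplus_{e\in EX}\Fpof{G}\hotimes[\Fpof{G_e}]\Fpof{G} \to \bigoplus_{v\in VX}\Fpof{G}\hotimes[\Fpof{G_v}]\Fpof{G} \to \Fpof{G} \to 0,\]
the ``augmented edge-to-vertex'' complex encoding the tree. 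First I would establish that this sequence is exact; for a proper graph of groups this is essentially the statement that the standard tree is connected and simply connected, and it can be extracted from the homological theory in \cite{Ribes17} together with properness, which guarantees that the induction modules $\Fpof{G}\hotimes[\Fpof{G_x}]\Fpof{G}$ are the honest induced modules.

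Next I would apply $\Ext^\bullet_{\Fpof{G}}(-,M)$ (respectively the derived $\bHom$ for the profinite-coefficient version) to this short exact sequence and invoke Shapiro's lemma to identify the cohomology of the induced terms: $\Ext^n_{\Fpof{G}}(\Fpof{G}\hotimes[\Fpof{G_x}]\Fpof{G},M)\cong H^n(G_x,M)$, and similarly in the ${\bf H}$ theory. This immediately produces the long exact sequence with the vertex and edge terms in the stated positions, and naturality in $M$ follows from naturality of Shapiro's isomorphism and of the connecting maps. For the first, discrete-coefficient statement this is routine, since finite modules $M$ are discrete torsion modules where the classical profinite cohomology machinery applies without finiteness hypotheses.

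The main obstacle will be the second assertion, the profinite-coefficient sequence and in particular its termination at $\bigoplus_{v\in VX}{\bf H}^N(G_v,M)$. Here the extended functors ${\bf H}^\bullet$ are only defined and well-behaved up to degree $N$ under the hypothesis that $G$ and all $G_x$ have type FP${}_N$, so I cannot simply cite a long exact sequence that runs to infinity. The plan is to use the FP${}_N$ hypothesis to choose, for each group involved, a resolution by finitely generated free (or projective) profinite modules that is exact through degree $N$, and to verify that applying $\bHom(-,M)$ to the short exact sequence of tree modules commutes with the relevant inverse limits---this is precisely where the continuity property ${\bf H}^n(G,\varprojlim M_i)\iso\varprojlim{\bf H}^n(G_i,M_i)$ of the extended theory, as developed in \cite{SW00}, is needed. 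I would reduce the general compact $M$ to an inverse limit of finite modules, apply the already-established finite-coefficient sequence levelwise, and then take the inverse limit, using that the Mittag--Leffler condition is automatic for the finitely generated cohomology groups arising under FP${}_N$ so that $\varprojlim^1$ terms vanish and exactness is preserved. The truncation at degree $N$ is then forced by the range in which the extended functors and Shapiro's lemma remain valid.
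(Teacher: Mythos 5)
Your overall architecture is exactly that of the paper's proof: invoke \cite[Theorem 6.3.5]{Ribes17} to obtain an action of $G$ on a pro-$p$ tree $T$ with $G\lqt T=X$ and stabilizers conjugate to the $G_x$; take the augmented chain complex of $T$ \cite[Section 2.4]{Ribes17}; apply $\Ext^\bullet(-,M)$ together with Shapiro's Lemma (in the form of, say, \cite[Proposition 1.23]{Wilkes19RelCoh}) to get the finite-coefficient sequence; then, under the FP${}_N$ hypothesis, write a compact $M$ as $\varprojlim M_i$ with $M_i$ finite, use continuity of ${\bf H}^n$ for $n\leq N$, and pass to the inverse limit of the finite-coefficient sequences, exactness being preserved by Mittag--Leffler. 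The truncation at $\bigoplus_{v}{\bf H}^N(G_v,M)$ arises just as you say, from the range of validity of the extended functors.

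However, the short exact sequence you display is not the chain complex of the tree, and taken literally it breaks your Shapiro step. The correct sequence (which the paper states over $\Z[p]$) is
\[0\to\bigoplus_{e\in EX}\Zpof{G/G_e}\to\bigoplus_{v\in VX}\Zpof{G/G_v}\to\Z[p]\to 0,\]
so the induced terms are the permutation modules $\Zpof{G}\hotimes[\Zpof{G_x}]\Z[p]\iso\Zpof{G/G_x}$, and the augmentation lands in the \emph{trivial} module, not in the group ring. With the modules you wrote: since $\Fpof{G}$ is free as a left $\Fpof{G_x}$-module via a continuous section of $G_x\lqt G$ \cite[Proposition 5.7.1]{RZ00}, the module $\Fpof{G}\hotimes[\Fpof{G_x}]\Fpof{G}\iso\Fpof{G}[\![G_x\lqt G]\!]$ is a free left $\Fpof{G}$-module, so $\Ext^n_{\Fpof{G}}(\Fpof{G}\hotimes[\Fpof{G_x}]\Fpof{G},M)$ vanishes for $n>0$ instead of being $H^n(G_x,M)$; likewise $\Ext^n_{\Fpof{G}}(\Fpof{G},M)$ is $M$ in degree $0$ and zero above, not $H^n(G,M)$. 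Your stated Shapiro isomorphism is therefore false for the displayed modules. Since your surrounding prose (``the augmented edge-to-vertex complex encoding the tree'') makes the intent clear, this is a repairable slip rather than a wrong strategy; once the permutation modules are corrected the argument coincides with the paper's. One further small point: for the Mittag--Leffler step the paper uses that ${\bf H}^n(G_x,M_i)$ and ${\bf H}^n(G,M_i)$ are \emph{finite} for finite $M_i$, which makes the condition automatic; your appeal to mere finite generation would need the additional remark that the relevant inverse systems consist of compact groups with continuous maps, so that $\varprojlim{}^1$ vanishes.
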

\begin{proof}
By \cite[Theorem 6.3.5]{Ribes17}, $G$ acts on a pro-$p$ tree $T$ with $G\lqt T=X$ and with each vertex and edge stabilizer conjugate to some $G_x$. The chain complex of the tree \cite[Section 2.4]{Ribes17} then takes the form
\[0\to \bigoplus_{e\in EX} \Zpof{G/G_e}\to \bigoplus_{v\in VX}\Zpof{G/G_v} \to \Z[p]\to 0.\]
For a finite $G$-module $M$ apply the functor $\Ext^\bullet_\Zpof{G}(-,M)$, which commutes with finite direct sums, and apply the Shapiro Lemma (in the form found in, for example, \cite[Proposition 1.23]{Wilkes19RelCoh}) to find the stated Mayer-Vietoris sequence
\[\cdots H^n(G,M)\to \bigoplus_{v\in VX}H^n(G_v,M)\to \bigoplus_{e\in EX}H^n(G_e,M)\to H^{n+1}(G,M)\cdots\]
Now assume that the finiteness conditions in the statement hold and let $M$ be a profinite $G$-module written as an inverse limit of finite $G$-modules $M=\varprojlim M_i$ \cite[Lemma 5.5.3(c)]{RZ00}. The functors ${\bf H}^n(G_x, -)$ and ${\bf H}^n(G,-)$ are continuous for $n\leq N$ and take finite values on the finite $G$-modules $M_i$. We now take an inverse limit of the Mayer-Vietoris sequences for the $M_i$, noting that the Mittag-Leffler condition guarantees exactness of the inverse limit functor in this case, to obtain the Mayer-Vietoris sequence for ${\bf H}^\bullet(G,M)$.
\end{proof}
\begin{rmk}
Suppose all edge groups $G_e$ are finite, as in the situation of this article. If $G$ has type FP${}_n$ then the first exact sequence in the proposition implies that all vertex groups $G_v$ also have type FP${}_n$ by \cite[Proposition 4.2.3]{SW00}, so we may also use the second exact sequence.
\end{rmk}
\section{Proofs}
\begin{prop}\label{prop:infdirsums}
Let $G$ be a profinite group and let $K$ be a closed subgroup of $G$. If $K$ is a profinite group of type FP$_n$ then 
\[{\bf H}^k(K, \Fpof{G}) \iso {\bf H}^k(K, \Fpof{K})[\![K\lqt G]\!]\]
(as an isomorphism of right $G$-modules) for $k\leq n$.
\end{prop}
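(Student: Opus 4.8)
The plan is to compute the left-$K$ cohomology of the bimodule $\Fpof{G}$ by routing everything through a free resolution and the exactness of completed tensor products, so that the right $G$-action is carried automatically. The structural input is the standard fact (see \cite{RZ00}) that $\Fpof{G}$ is a \emph{free} profinite left $\Fpof{K}$-module, with basis any continuous transversal of the right cosets $K\lqt G$; concretely there is an isomorphism of $(K,G)$-bimodules $\Fpof{G}\iso\Fpof{K}\hotimes[\F_p]\Fpof{K\lqt G}$, where the left $K$-action is by left multiplication on the first factor and the right $G$-action permutes cosets $Kt\mapsto Kth$ while right-multiplying the first factor by the resulting cocycle in $K$. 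In particular, for a profinite right $\Fpof{K}$-module $V$ the object $V[\![K\lqt G]\!]$ in the statement is naturally the induced module $V\hotimes[\Fpof{K}]\Fpof{G}$, and freeness of $\Fpof{G}$ over $\Fpof{K}$ makes the functor $-\hotimes[\Fpof{K}]\Fpof{G}$ exact on profinite right $\Fpof{K}$-modules.

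Because $K$ has type FP$_n$, I would fix a resolution $P_\bullet\to\F_p$ by finitely generated free profinite left $\Fpof{K}$-modules in degrees $\leq n$, so that for a profinite $G$-module $M$ the groups ${\bf H}^k(K,M)$ with $k\leq n$ are the cohomology of the complex $\bHom_{\Fpof{K}}(P_\bullet,M)$. For each $P_i=\Fpof{K}^{d_i}$ there is a canonical isomorphism of right $G$-modules
\[\bHom_{\Fpof{K}}(P_i,\Fpof{K})\hotimes[\Fpof{K}]\Fpof{G}\iso\bHom_{\Fpof{K}}(P_i,\Fpof{G}),\]
both sides being $(\Fpof{G})^{d_i}$, and these are compatible with the differentials. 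Thus in the relevant range the complex computing ${\bf H}^\bullet(K,\Fpof{G})$ is obtained from the one computing ${\bf H}^\bullet(K,\Fpof{K})$ by applying $-\hotimes[\Fpof{K}]\Fpof{G}$.

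Since this functor is exact it commutes with passage to cohomology of the complex, giving
\[{\bf H}^k(K,\Fpof{G})\iso {\bf H}^k(K,\Fpof{K})\hotimes[\Fpof{K}]\Fpof{G}={\bf H}^k(K,\Fpof{K})[\![K\lqt G]\!]\]
as right $G$-modules. The right $G$-structure resides entirely in the $\Fpof{G}$ tensor factor, so it matches the module in the statement with no further work; this is the main advantage of using the induced-module description rather than an explicit transversal, where the equivariance would have to be checked by hand against the cocycle.

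I expect the principal obstacle to sit at the top degree $k=n$. A type-FP$_n$ resolution is finitely generated only through degree $n$, so the identification $\bHom_{\Fpof{K}}(P_\bullet,\Fpof{K})\hotimes[\Fpof{K}]\Fpof{G}\iso\bHom_{\Fpof{K}}(P_\bullet,\Fpof{G})$ can break down in degree $n+1$, where the completed tensor product need not commute with the infinite products computing $\bHom$ out of a non-finitely-generated free module. To close this I would invoke the continuity of the extended functors: writing $\Fpof{K\lqt G}=\varprojlim_i\F_p[S_i]$ over finite quotient spaces $S_i$ of $K\lqt G$ presents $\Fpof{G}$ as an inverse limit of the finite direct sums $\bigoplus_{S_i}\Fpof{K}$, and the property ${\bf H}^k(K,\varprojlim M_i)\iso\varprojlim{\bf H}^k(K,M_i)$ supplied by FP$_n$, together with the fact that ${\bf H}^k(K,-)$ commutes with finite direct sums, computes ${\bf H}^k(K,\Fpof{G})$ for all $k\leq n$. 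The remaining work is bookkeeping: reconciling the limit computation with the resolution computation, and verifying that the right $G$-action survives the inverse limit even though the finite approximations $\bigoplus_{S_i}\Fpof{K}$ are not themselves $G$-submodules.
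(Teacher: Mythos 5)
Your proposal is correct, but the device you introduce to ``close'' the top-degree gap is not a patch on your main argument---it is, in essence, the paper's entire proof. The paper chooses a continuous section of $K\lqt G$, invokes \cite[Proposition 5.7.1]{RZ00} for the $(K,G)$-bimodule isomorphism $\Fpof{G}\iso\Fpof{K}[\![K\lqt G]\!]\iso\varprojlim_U\Fpof{K}[K\lqt G/U]$ (inverse limit over open normal subgroups $U$ of $G$), and then applies the continuity of ${\bf H}^k(K,-)$ for $k\leq n$ granted by FP${}_n$, together with commutation of cohomology with finite direct sums, to conclude ${\bf H}^k(K,\Fpof{G})\iso\varprojlim{\bf H}^k(K,\Fpof{K})[K\lqt G/U]\iso{\bf H}^k(K,\Fpof{K})[\![K\lqt G]\!]$; there is no resolution anywhere. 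Your resolution-theoretic opening is sound in degrees $k<n$, where only the finitely generated portion $P_0,\dots,P_n$ enters and the comparison $\bHom_{\Fpof{K}}(P_i,\Fpof{K})\hotimes[\Fpof{K}]\Fpof{G}\to\bHom_{\Fpof{K}}(P_i,\Fpof{G})$ is an isomorphism (your exactness claim for $-\hotimes[\Fpof{K}]\Fpof{G}$ deserves one justifying sentence: over $\F_p$ it holds because profinite $\F_p$-modules are Pontryagin duals of discrete $\F_p$-vector spaces, so epimorphisms split and completed tensor with the free module $\Fpof{K}[\![K\lqt G]\!]$ is exact). But since degree $k=n$ forces you into the continuity argument anyway, the paper simply runs that argument uniformly in all degrees $k\leq n$ and dispenses with the resolution. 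Where your formulation genuinely adds value is exactly the point you flag as ``bookkeeping'': writing the target as the induced module ${\bf H}^k(K,\Fpof{K})\hotimes[\Fpof{K}]\Fpof{G}$ makes the right $G$-structure and the $G$-equivariance of the final isomorphism automatic, whereas in the paper's proof the finite stages $\Fpof{K}[K\lqt G/U]$ are \emph{not} right $G$-modules (the cocycle $\sigma(Kt)\,g\,\sigma(Ktg)^{-1}$ attached to the section depends on $Kt$ and not merely on $KtU$, so the right action does not descend); the inverse limit there is one of left $\Fpof{K}$-modules only, and the equivariance of the composite isomorphism is carried implicitly by naturality of the continuity isomorphisms in the coefficient module---a step the paper leaves unspoken and which your closing paragraph correctly identifies as the residual work.
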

\begin{proof}
Take a continuous section $\sigma\colon K\lqt G\to G$ (which exists by \cite[Proposition 2.2.2]{RZ00}), so that we have \cite[Proposition 5.7.1]{RZ00} an isomorphism of $(K,G)$-bimodules 
\[\Fpof{G} \iso \Fpof{K}[\![K\lqt G]\!]\iso \varprojlim \Fpof{K}[K\lqt G/U]\]
where the inverse limit is taken over open normal subgroups $U$ of $G$. Using continuity of the functor ${\bf H}^k(K,-)$, we have
\begin{multline*}{\bf H}^k(K, \Fpof{G}) \iso {\bf H}^k(K,\varprojlim \Fpof{K}[K\lqt G/U])\iso \varprojlim {\bf H}^k(K,\Fpof{K}[K\lqt G/U])\\
\iso \varprojlim {\bf H}^k(K,\Fpof{K})[K\lqt G/U] \iso {\bf H}^k(K, \Fpof{K})[\![K\lqt G]\!] 
 \end{multline*}
as required. To move from the first line to the second,  note that cohomology commutes with finite direct sums in the second entry. 
\end{proof}

\begin{lem}
Let $G$ be a profinite group and let $K\leq G$. Then 
\[{\bf H}^0(K, \Fpof{G})\iso\begin{cases}
0 & \text{ if $K$ is infinite}\\ N_K\Fpof{G} \iso \Fpof{K\lqt G}& \text{ if $K$ is finite}
\end{cases} \]
where $N_K = \sum_{k\in K} k$.
\end{lem}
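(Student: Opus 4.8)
The plan is to reduce at once to the case $K=G$ using Proposition~\ref{prop:infdirsums} and then to compute the left-invariants of $\Fpof{K}$ directly. Every profinite group has type FP${}_0$, since $\F_p$ is generated by a single element over the completed group ring, so Proposition~\ref{prop:infdirsums} applies with $k=0$ and yields an isomorphism of right $G$-modules
\[{\bf H}^0(K,\Fpof{G})\iso {\bf H}^0(K,\Fpof{K})[\![K\lqt G]\!].\]
Recalling the convention that ${\bf H}^0(K,-)$ computes the left-invariants ${}^K(-)$, the whole statement therefore reduces to identifying ${}^K\Fpof{K}$ together with its right $K$-action, after which both cases follow by substituting into the displayed isomorphism.

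For finite $K$ the completed group ring is simply the finite group algebra $\F_p[K]$, and I would compute its invariants by hand: writing $x=\sum_{g\in K}a_g\,g$, the condition $kx=x$ for all $k\in K$ forces every coefficient $a_g$ to coincide, so that ${}^K\F_p[K]=\F_p\,N_K$ is one-dimensional and spanned by the norm. Substituting $\F_p N_K$ for ${}^K\Fpof{K}$ gives ${\bf H}^0(K,\Fpof{G})\iso (\F_p N_K)[\![K\lqt G]\!]$; since $N_K g=N_K g'$ exactly when $Kg=Kg'$, this module is precisely $N_K\Fpof{G}$, and the assignment $N_K g\mapsto Kg$ furnishes the required identification with $\Fpof{K\lqt G}$ as right $G$-modules.

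The infinite case, where I must show ${}^K\Fpof{K}=0$, is where I expect the real work to lie. I would write $\Fpof{K}=\varprojlim_U\F_p[K/U]$ over the open normal subgroups $U$ of $K$; since taking invariants is a $\Hom_{\Fpof{K}}(\F_p,-)$ and hence commutes with inverse limits, it suffices to compute the inverse limit of the one-dimensional spaces ${}^K\F_p[K/U]=\F_p\,N_{K/U}$. The transition map attached to an inclusion $U'\subseteq U$ sends $N_{K/U'}$ to $[U:U']\,N_{K/U}$, so the point is that for an infinite pro-$p$ group $K$ every open normal $U$ contains a proper open normal $U'$ with $p\mid[U:U']$; the corresponding transition map then vanishes modulo $p$, the system is zero cofinally, and the inverse limit collapses to $0$. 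The main obstacle is thus the verification that these transition maps are eventually zero, which is exactly the step that uses the $p$-power index of open subgroups; conceptually it reflects the fact that, $\Fpof{K}$ being a local ring, a nonzero invariant element would produce a copy of the trivial simple module $\F_p$ as a submodule of $\Fpof{K}$, which cannot occur when $K$ is infinite.
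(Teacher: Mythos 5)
Your route is genuinely different from the paper's, and in the pro-$p$ setting it is complete and more self-contained. The paper treats the two cases asymmetrically: for infinite $K$ it combines Proposition~\ref{prop:infdirsums} with an external vanishing result (\cite[Lemma 3]{MS02}, reproved in \cite[Proposition B.1]{Wilkes19RelCoh}), while for finite $K$ it avoids Proposition~\ref{prop:infdirsums} altogether and computes $\varprojlim {\bf H}^0(K,\F_p[G/U])$ over open normal $U\trianglelefteq G$ with $U\cap K=1$, which gives the equality ${\bf H}^0(K,\Fpof{G})=N_K\Fpof{G}$ on the nose. You instead apply Proposition~\ref{prop:infdirsums} uniformly (your observation that FP${}_0$ is vacuous is correct) and replace the citation by an explicit inverse-limit computation of ${}^K\Fpof{K}$; the invariants of each $\F_p[K/U]$ are indeed spanned by the norm and the transition maps are multiplication by $[U:U']\bmod p$, so your argument supplies exactly the content the paper outsources. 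Two small points on the finite case: the reason the a priori twisted right $G$-structure on $(\F_p N_K)[\![K\lqt G]\!]$ descends to the translation action on $\Fpof{K\lqt G}$ is that the right $K$-action on $\F_p N_K$ is trivial ($N_K k = N_K$), which is worth saying explicitly; and since Proposition~\ref{prop_more} later uses the literal containment ${\bf H}^0(G_e,\Fpof{G})=N_{G_e}\Fpof{G}$ inside $\Fpof{G}$, you should note that your identifications are induced by the section $\sigma$, so the invariant submodule is the closed span of the elements $N_K\sigma(Kx)$, i.e.\ $N_K\Fpof{G}$ itself, not merely abstractly isomorphic to it.

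The one substantive issue is scope. The lemma is stated for an arbitrary profinite $K$, but your vanishing argument in the infinite case silently assumes $K$ is pro-$p$: that is where the claim ``every open normal $U$ contains an open normal $U'$ with $p\mid[U:U']$'' comes from (correct for infinite pro-$p$ $K$, via the core of a proper open subgroup of $U$). This restriction cannot be removed by more care, because the statement as literally written is false for general profinite $K$: take $K=G=\Z[\ell]$ with $\ell\neq p$. Then every transition map in your system is multiplication by a power of $\ell$, invertible mod $p$, and the compatible idempotents $e_n=\ell^{-n}N_{K/U_n}\in\F_p[K/U_n]$ assemble into a nonzero $K$-invariant element, so ${\bf H}^0(K,\Fpof{K})\iso\F_p\neq 0$. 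The correct hypothesis for vanishing is that every open subgroup of $K$ has order divisible by $p$ (equivalently, $p^\infty$ divides $\#K$), which holds automatically for infinite pro-$p$ groups, and under which your proof goes through verbatim. Since every $K$ to which the lemma is applied in this paper is a closed subgroup of a pro-$p$ group, your argument covers all actual uses; but you should state the restriction rather than prove the pro-$p$ case under the heading ``profinite''---the paper's own proof conceals exactly this point inside the citation.
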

\begin{proof}
If $K$ is infinite then this result is deduced from Proposition \ref{prop:infdirsums} via \cite[Lemma 3]{MS02}\footnote{This is the citation as given in \cite{Korenev04}. The author was unable to locate the paper \cite{MS02}, so for convenience we will note that a proof may also be found in \cite[Proposition B.1]{Wilkes19RelCoh}}. If $K$ is finite then take an open normal subgroup $U$ of $G$ such that $U\cap K = 1$ and note that 
\[H^0(K, \F_p[G/U]) = {}^K\!\F_p[G/U] = N_K\F_p[G/U].\]
Then write 
\[{\bf H}^0(K, \Fpof{G}) \iso \varprojlim {\bf H}^0(K,\F_p[G/U]) = \varprojlim N_K\F_p[G/U] = N_K \Fpof{G} \]
where the limit is taken over open normal subgroups $U$ of $G$ such that $U\cap K=1$.
\end{proof}
\begin{lem}
If $G$ is finite then $H^1(G,\F_p[G])=0$.
\end{lem}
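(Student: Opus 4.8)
The plan is to recognise $\F_p[G]$ as a (co)induced module and then invoke the Shapiro Lemma, exactly the tool already used above in the proof of the Mayer--Vietoris sequence. As a module over itself, $\F_p[G]$ is free of rank one; equivalently it is the module $\ind_1^G\F_p$ induced from the trivial subgroup. Since $G$ is finite, the trivial subgroup has finite index in $G$, and induction and coinduction from a finite-index subgroup agree. Hence
\[ \F_p[G]\iso \ind_1^G\F_p \iso \coind_1^G\F_p \]
as $G$-modules.

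With this identification in hand, I would apply the Shapiro Lemma in its cohomological form to obtain, for every $n\geq 1$,
\[ H^n(G,\F_p[G]) \iso H^n(G,\coind_1^G\F_p) \iso H^n(1,\F_p). \]
The cohomology of the trivial group vanishes in positive degrees, so in particular $H^1(G,\F_p[G]) = H^1(1,\F_p) = 0$, which is the assertion of the lemma.

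There is essentially no obstacle here beyond bookkeeping, but two points merit care. First, the Shapiro Lemma for group cohomology is phrased in terms of coinduction, whereas the free module $\F_p[G]$ presents most naturally as an induced module; the finiteness of $G$ is precisely what collapses this distinction, so it is worth recording that step explicitly. Second, I emphasise that the argument proceeds structurally rather than by an averaging (transfer) argument: in the cases of interest $G$ is a finite $p$-group, so $p$ divides $|G|$ and one cannot simply divide the norm element by $|G|$ to split off the cohomology. The underlying reason the vanishing persists in this modular situation is that $\F_p[G]$ is a self-injective (Frobenius) algebra, so that $H^n(G,-)=\Ext^n_{\F_p[G]}(\F_p,-)$ vanishes on it in positive degrees; the Shapiro Lemma packages this fact in the most economical form. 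Note also that nothing in the proof requires $G$ to be a $p$-group, so the statement holds for an arbitrary finite group.
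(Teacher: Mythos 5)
Your proof is correct and follows exactly the paper's own argument: the paper likewise identifies $\F_p[G]$ with the coinduced module (possible since induction and coinduction agree for the finite group $G$) and concludes by Shapiro's Lemma. You have simply spelled out the one-line proof in more detail, and your added remarks (on why a transfer argument fails for $p$-groups, and on self-injectivity of $\F_p[G]$) are accurate context rather than a different route.
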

\begin{proof}
This follows from Shapiro's Lemma, since for $G$ finite the induced and coinduced modules are isomorphic.
\end{proof}
We will require the following two propositions.
\begin{prop}[\cite{Korenev04}, Corollary 3]
Let $G$ be a finitely generated free pro-$p$ group. Then \[{\bf H}^1(G,\Fpof{G})\neq 0.\]
\end{prop}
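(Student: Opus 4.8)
The plan is to compute ${\bf H}^1(G,\Fpof{G})$ directly from the standard free resolution of the trivial module over a free pro-$p$ group. Let $x_1,\dots,x_n$ be a free generating set for $G$, with $n\geq 1$ (a free pro-$p$ group of rank $0$ is trivial, hence finite, so is excluded). The augmentation ideal of $\Fpof{G}$ is free of rank $n$ on the elements $x_i-1$, which yields the short exact sequence of left $\Fpof{G}$-modules
\[0\to \Fpof{G}^n \xrightarrow{\partial} \Fpof{G}\xrightarrow{\epsilon}\F_p\to 0,\]
where $\partial$ sends the $i$-th basis element to $x_i-1$ and $\epsilon$ is the augmentation. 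Since a finitely generated free pro-$p$ group admits this finite free resolution it is of type FP${}_\infty$, so the extended functors ${\bf H}^k(G,-)=\bExt^k_{\Fpof{G}}(\F_p,-)$ are defined and may be computed from the resolution.

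First I would apply $\bHom_{\Fpof{G}}(-,\Fpof{G})$ to the deleted resolution $\Fpof{G}^n\xrightarrow{\partial}\Fpof{G}$, identifying $\bHom_{\Fpof{G}}(\Fpof{G},\Fpof{G})\iso\Fpof{G}$ and $\bHom_{\Fpof{G}}(\Fpof{G}^n,\Fpof{G})\iso\Fpof{G}^n$ by evaluation at the basis elements. Under these identifications the induced cochain map $\partial^*\colon \Fpof{G}\to\Fpof{G}^n$ is $m\mapsto((x_i-1)m)_i$. As the resolution has length $1$, the complex is concentrated in degrees $0$ and $1$, so ${\bf H}^1(G,\Fpof{G})$ is exactly the cokernel of $\partial^*$.

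Next I would observe that this cokernel is nonzero. Each component $(x_i-1)m$ lies in the augmentation ideal $I=\ker(\epsilon)$, since $\epsilon((x_i-1)m)=(\epsilon(x_i)-1)\epsilon(m)=0$. Hence $\im(\partial^*)\subseteq I^n$, which is a proper submodule of $\Fpof{G}^n$ because $\epsilon$ surjects onto $\F_p\neq 0$ and $n\geq 1$. The cokernel therefore surjects onto $(\Fpof{G}/I)^n\iso\F_p^n$, which is nonzero, giving ${\bf H}^1(G,\Fpof{G})\neq 0$ as required.

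The only point requiring care is topological: ${\bf H}^1$ is a cokernel in the category of profinite modules, so a priori one must quotient by the \emph{closure} of $\im(\partial^*)$. This causes no difficulty, as the image of a continuous homomorphism of profinite modules is automatically closed, being the continuous image of a compact set in a Hausdorff space, and $I^n$ is closed since $I$ is the kernel of the continuous map $\epsilon$. Thus the containment $\im(\partial^*)\subseteq I^n\subsetneq\Fpof{G}^n$ genuinely forces the cokernel to be nonzero, and no subtlety about taking closures can interfere with the argument.
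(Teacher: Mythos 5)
Your proof is correct, and it is necessarily a different route from the paper's, because the paper contains no proof at all: the statement is imported verbatim from Korenev as \cite[Corollary 3]{Korenev04}, where it arises as a byproduct of his computation of the number of ends of free pro-$p$ groups (via Shapiro-type reductions to open subgroups), whereas you compute the module of ends directly from the length-one free resolution $0\to\Fpof{G}^n\to\Fpof{G}\to\F_p\to 0$. Your identification of the dual map as $\partial^*(m)=((x_i-1)m)_i$ is right (a continuous left-module endomorphism of $\Fpof{G}$ is right multiplication by its value at $1$), the observation that $\im(\partial^*)\subseteq I^n$ forces the cokernel to surject onto $(\Fpof{G}/I)^n\iso\F_p^n$ is sound, and your closing paragraph correctly disposes of the only genuine profinite subtlety, since images of continuous homomorphisms of profinite modules are compact and hence closed. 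What your approach buys is more than the statement: it exhibits ${\bf H}^1(G,\Fpof{G})$ explicitly as $\operatorname{coker}(\partial^*)$, shows $\dim_{\F_p}{\bf H}^1(G,\Fpof{G})\geq n$, and as a sanity check recovers $e(\Z[p])=2$ (cokernel $\F_p[\![t]\!]/t\F_p[\![t]\!]$) and $e=\infty$ for rank at least $2$, consistent with \cite[Theorem 1]{Korenev04}. Two small points deserve attention. First, the one nontrivial input you assert without reference---that the augmentation ideal of $\Fpof{G}$ is free of rank $n$ on the elements $x_i-1$---is indeed standard (it follows from the Magnus embedding identifying $\Fpof{G}$ with formal power series in $n$ non-commuting variables via $x_i\mapsto 1+t_i$, or from pro-$p$ Fox calculus), but it carries essentially all the weight of the argument and should be cited or proved. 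Second, your exclusion of rank $0$ is not pedantry but a genuine correction to the statement as quoted: the trivial group is a finitely generated free pro-$p$ group, and by the paper's own lemma on finite groups its ${\bf H}^1$ vanishes, so the proposition is only true for nontrivial $G$---which is how it is used in the paper, namely to conclude that infinite virtually free pro-$p$ groups have more than one end.
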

\begin{prop}[\cite{Korenev04}, Lemma 2]
Let $G$ be a finitely generated pro-$p$ group and let $U$ be an open subgroup of $G$. Then ${\bf H}^1(G,\Fpof{G})\iso {\bf H}^1(U,\Fpof{U})$.
\end{prop}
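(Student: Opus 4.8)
The plan is to exhibit $\Fpof{G}$, regarded as a left $G$-module, as a module coinduced from $U$, and then apply Shapiro's lemma. Since $G$ is finitely generated and $U$ is open, $U$ has finite index in $G$ and is itself finitely generated, so both $G$ and $U$ have type FP$_1$; in particular the functors ${\bf H}^1(G,-)$ and ${\bf H}^1(U,-)$ are defined on profinite coefficients and satisfy the continuity property ${\bf H}^1(-,\varprojlim M_i)\iso\varprojlim{\bf H}^1(-,M_i)$. The essential point is that, because $U$ is of finite index, the induced and coinduced modules $\ind_U^G$ and $\coind_U^G$ coincide, and $\ind_U^G\Fpof{U}=\Fpof{G}\hotimes[\Fpof{U}]\Fpof{U}\iso\Fpof{G}$ as left $G$-modules (concretely, a choice of right coset representatives decomposes $\Fpof{G}$ as a finite direct sum $\bigoplus_{U\lqt G}\Fpof{U}$ of left $\Fpof{U}$-modules). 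Shapiro's lemma then reads ${\bf H}^1(G,\coind_U^G\Fpof{U})\iso{\bf H}^1(U,\Fpof{U})$, and combining this with the identification above gives the result.

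To keep the finiteness conditions transparent I would carry out the computation at the level of finite quotients, where ordinary Shapiro and the identity $\ind=\coind$ for finite-index subgroups are completely standard, and only pass to the limit at the end. Write $\Fpof{G}=\varprojlim_V\F_p[G/V]$, where $V$ runs over the open normal subgroups of $G$ contained in $U$. Taking $G$-cores shows that these $V$ are cofinal among the open normal subgroups of $U$, so simultaneously $\Fpof{U}=\varprojlim_V\F_p[U/V]$. For each such $V$ one has $\F_p[G/V]\iso\ind_U^G\F_p[U/V]\iso\coind_U^G\F_p[U/V]$, whence ordinary Shapiro for finite modules gives an isomorphism $H^1(G,\F_p[G/V])\iso H^1(U,\F_p[U/V])$, natural in $V$. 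Passing to the inverse limit over $V$ and invoking continuity of ${\bf H}^1$ (valid since $G$ and $U$ have type FP$_1$) yields ${\bf H}^1(G,\Fpof{G})\iso{\bf H}^1(U,\Fpof{U})$; tracking the commuting right multiplication by $\Fpof{U}$ through the Shapiro isomorphism upgrades this to an isomorphism of right $\Fpof{U}$-modules if desired.

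The main obstacle is conceptual rather than computational: one must ensure the isomorphism genuinely comes from Shapiro and the coincidence $\ind_U^G\iso\coind_U^G$ for the open subgroup $U$, and not from a transfer argument. Indeed $[G:U]$ is a power of $p$, so the composite $\cor\circ\res=[G:U]\cdot\id$ vanishes over $\F_p$ and the restriction and corestriction maps carry no information; the content of the lemma is precisely that $\Fpof{G}$ is coinduced from $\Fpof{U}$, a feature special to the regular coefficient module. The remaining technical care lies in checking that the Shapiro isomorphisms for the finite modules $\F_p[G/V]$ are compatible with the structure maps of the inverse system, so that they assemble into an isomorphism of the limits.
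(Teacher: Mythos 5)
Your proof is correct; note that the paper itself gives no argument for this statement, which is quoted directly from Korenev (\cite{Korenev04}, Lemma 2), so the comparison here is with the paper's general toolkit rather than with an in-paper proof. Your two-step structure---identify $\F_p[G/V]\iso\ind_U^G\F_p[U/V]\iso\coind_U^G\F_p[U/V]$ and apply ordinary Shapiro at the level of finite modules, then pass to the inverse limit using FP${}_1$-continuity of ${\bf H}^1$---is precisely the template the paper uses elsewhere, for instance in Proposition \ref{prop:infdirsums} and in the computation of ${\bf H}^0(K,\Fpof{G})$, where the profinite coefficient module is written as $\varprojlim$ of finite modules and the limit is exchanged with cohomology. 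The points you flag are all sound and adequately handled: $U$ is finitely generated (it is open in a finitely generated pro-$p$ group), hence has type FP${}_1$, so continuity of both ${\bf H}^1(G,-)$ and ${\bf H}^1(U,-)$ is available; taking $G$-cores does make the open normal subgroups $V\nsgp G$ with $V\leq U$ cofinal among open normal subgroups of $U$, so the same index set computes both limits; and the compatibility worry in your last paragraph is genuine but resolves immediately, since the transition map $\F_p[G/V']\to\F_p[G/V]$ is exactly $\ind_U^G$ applied to $\F_p[U/V']\to\F_p[U/V]$ and the Shapiro isomorphism is natural in the coefficient module---note also that no Mittag-Leffler argument is needed here, because an isomorphism of inverse systems induces an isomorphism of limits without any exactness input. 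Your observation that restriction--corestriction carries no information (as $\cor\circ\res=[G:U]\cdot\id=0$ in characteristic $p$ since $[G:U]$ is a $p$-power) is a correct sanity check confirming that the content lies in the coinduced structure of the regular module, which is indeed the substance of Korenev's lemma.
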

In particular, a finitely generated infinite virtually free pro-$p$ group is not one-ended.
\begin{prop}\label{prop_more}
Let $G$ be a finitely generated pro-$p$ group with a non-trivial proper reduced graph of groups decomposition $(X,G_\bullet)$ with finite edge groups. Then ${\bf H}^1(G,\Fpof{G})\neq 0$.
\end{prop}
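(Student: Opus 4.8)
The plan is to feed the coefficient module $\Fpof{G}$ into the Mayer--Vietoris sequence for the splitting $(X,G_\bullet)$ and to extract a nonzero contribution to ${\bf H}^1(G,\Fpof{G})$ from its low-degree terms. First I would check that the hypotheses for the continuous (${\bf H}^\bullet$) version of the sequence are met: $G$ is finitely generated, hence of type FP${}_1$; the edge groups are finite, hence of type FP${}_\infty$; and therefore, by the remark following the Mayer--Vietoris proposition, the vertex groups are of type FP${}_1$ as well. Since $\Fpof{G}$ is a compact $G$-module the sequence is then available up to degree $N=1$, which is all we need.

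Next I would identify the low-degree terms using the earlier computation of ${\bf H}^0(K,\Fpof{G})$. A non-trivial reduced splitting with at least one edge forces $G$ to be infinite (a separating edge yields an amalgam over a subgroup properly contained in both factors, while a non-separating edge or loop yields a quotient onto $\Z[p]$), so ${\bf H}^0(G,\Fpof{G})=0$ and the relevant segment reads
\[0\to\bigoplus_{v\in VX}{\bf H}^0(G_v,\Fpof{G})\xrightarrow{\ \alpha\ }\bigoplus_{e\in EX}{\bf H}^0(G_e,\Fpof{G})\to{\bf H}^1(G,\Fpof{G})\to\cdots,\]
where each edge term is ${\bf H}^0(G_e,\Fpof{G})\iso\Fpof{G_e\lqt G}$ (edge groups being finite) and each vertex term is $\Fpof{G_v\lqt G}$ if $G_v$ is finite and $0$ if $G_v$ is infinite. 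By exactness $\operatorname{coker}\alpha$ injects into ${\bf H}^1(G,\Fpof{G})$, so it suffices to prove that $\alpha$ is \emph{not} surjective.

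To do this I would compose $\alpha$ with an augmentation on each edge. The $e$-component of $\alpha$ is, up to sign, the difference of the two restriction maps $\res^{G_{d_k(e)}}_{G_e}\colon\Fpof{G_{d_k(e)}\lqt G}\to\Fpof{G_e\lqt G}$ attached to the boundary monomorphisms $\partial_{e,k}$; concretely a coset $[G_v g]$ is sent to the sum of the $[G_v:G_e]$ cosets $[G_e x]$ it contains. Following this by the augmentation $\epsilon_e\colon\Fpof{G_e\lqt G}\to\F_p$ scales the augmentation of a vertex class by the index $[G_{d_k(e)}:\partial_{e,k}(G_e)]$. The key point is that every index occurring vanishes modulo $p$: for a non-loop edge, reducedness makes each $\partial_{e,k}$ non-surjective, so a finite vertex group gives an index that is a positive power of $p$, while an infinite vertex group contributes the zero term; for a loop the two boundary images have equal order $|G_e|$, so the two contributions cancel. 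Hence $\epsilon_e\circ\alpha=0$ for every $e$, the image of $\alpha$ lies in the proper submodule $\bigoplus_e\ker\epsilon_e$, and $\alpha$ is not surjective, whence ${\bf H}^1(G,\Fpof{G})\neq0$.

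The main obstacle is precisely this non-surjectivity of $\alpha$: a naive generator count via Lemma \ref{lem:directsumslocalring} is not sharp enough—an amalgam of two finite groups, say, has two generating vertex terms mapping into a single cyclic edge term—so one genuinely needs the refined augmentation obstruction, in which the pro-$p$ hypothesis (forcing the relevant subgroup indices to be divisible by $p$) together with reducedness does the essential work. A secondary point needing care is the precise identification of the Mayer--Vietoris connecting map with the signed difference of restriction maps, and of those restrictions with the index-weighted coset sums; this is where the sign conventions and the $(G,G)$-bimodule identifications must be pinned down.
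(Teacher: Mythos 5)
Your argument is correct, but it takes a genuinely different route from the paper's. The paper first collapses the decomposition to a one-edge splitting and then argues by cases on finiteness of the vertex groups: when all vertex groups are finite it invokes Proposition \ref{prop:virtfree} together with Korenev's two results (nonvanishing of the module of ends for free pro-$p$ groups, and its invariance under passage to open subgroups) to conclude that an infinite virtually free pro-$p$ group is not one-ended; when a vertex group of an amalgam is infinite it observes that surjectivity of your map $\alpha$ would force $N_{G_e}\in N_{G_v}\Fpof{G}$, making $N_{G_e}$ left-$G_v$-invariant and hence $G_v=G_e$, contradicting reducedness. You instead keep the whole graph and defeat surjectivity of $\alpha$ uniformly by composing with the augmentations $\epsilon_e$, with reducedness entering only through $p$-divisibility of the indices $[G_{d_k(e)}:\partial_{e,k}(G_e)]$ at finite vertices and with loops handled by sign cancellation. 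This buys a self-contained proof with no case analysis and no appeal to Korenev's Corollary 3 and Lemma 2, nor to the virtually free structure theory at all. The cost is exactly the bookkeeping you flag, and it is genuinely load-bearing: the minus sign in the Mayer--Vietoris differential, inherited from the tree's chain complex ($\partial e = d_1(e)-d_0(e)$, with twists by stable letters), is indispensable in the loop case, since when both boundary maps of a loop are isomorphisms the two index contributions are each $1$ and only their \emph{difference} vanishes for odd $p$; the twist by $t_e$ is harmless because right translation permutes cosets and preserves augmentation. Your identification of $\epsilon_e\circ\res$ with multiplication by the index does hold, and is cleanest checked at each finite level $\F_p[G/U]$ with $U\cap G_v=1$, where a free $G_v$-orbit on $G/U$ splits into exactly $[G_v:\partial_{e,k}(G_e)]$ free orbits of the edge group, after which one passes to the inverse limit. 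One minor simplification: the infinitude of $G$, whose justification you only sketch, is not actually needed for your argument---exactness at the edge term already gives an injection of the cokernel of $\alpha$ into ${\bf H}^1(G,\Fpof{G})$, regardless of whether ${\bf H}^0(G,\Fpof{G})$ vanishes.
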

\begin{proof}
It suffices, by collapsing \cite[Proposition 2.8]{Wilkes19} a portion of the finite graph of groups, to consider one-edge splittings.  We apply a Mayer-Vietoris sequence, of which the relevant portion is:
\[0 \to \bigoplus_{v\in VX} {\bf H}^0(G_v,\Fpof{G}) \to {\bf H}^0(G_e,\Fpof{G}) \to {\bf H}^1(G,\Fpof{G}) \]
where we note that $G$ is infinite so that ${\bf H}^0(G,\Fpof{G})=0$. We derive a contradiction from the assumption ${\bf H}^1(G,\Fpof{G})= 0$. 

If $X$ is a loop with an edge $e$ and vertex $v$ then either $G_v$ is finite, so that $G$ is virtually free (Proposition \ref{prop:virtfree}) and thus not one-ended, or $G_v$ is infinite and $G_e$ finite, so that the map 
\[0={\bf H}^0(G_v,\Fpof{G}) \to {\bf H}^0(G_e,\Fpof{G})\neq 0\]
cannot possibly be surjective.

If $X$ has a single edge $e$ and two vertices $v$ and $w$, then either $G_v$ and $G_w$ are both finite---whence again $G$ is virtually free---or without loss of generality $G_w$ is infinite, hence we have a surjection
\[{\bf H}^0(G_v,\Fpof{G}) \to {\bf H}^0(G_e,\Fpof{G}) = N_{G_e}\Fpof{G}.\]
It follows that $N_{G_e} \in N_{G_v}\Fpof{G}$ is (left-)$G_v$-invariant, so that $G_v=G_e$ contradicting the reduced hypothesis.
\end{proof}
We will need the following graph theoretic lemma.
\begin{lem}\label{lem_counting} Let $X$ be a graph. Let $M(X)$ denote the size of a maximal matching in $X$: a maximal set $\{e_i\}$ of edges of $X$ such that $e_i$ and $e_j$ do not share an endpoint if $i\neq j$. Let $T(X)$ denote the quantity
\[T(X) = \#(\text{leaves in }X)  - \chi(X).\]
Then \[|EX| \leq 2M(X) + 9 T(X).\]
\end{lem}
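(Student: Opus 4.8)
The plan is to bound $|EX|$ by separately accounting for two kinds of edges: those that can be absorbed into a matching, and the remainder, which I will control using the topology of $X$ encoded in $T(X)$. First I would fix a maximal matching $\{e_i\}$ of size $M = M(X)$. Let $W$ be the set of endpoints of the matched edges; since the matching is maximal, every unmatched edge has at least one endpoint in $W$ (otherwise it could be added to the matching). So the task reduces to bounding the number of edges meeting $W$ but not belonging to the matching, and $|W| \le 2M$.

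The key idea is that edges incident to a vertex $w$ can be charged either to the matching or to the graph's topology. A vertex of degree $1$ is a leaf and is counted by $\#(\text{leaves})$; a vertex of high degree contributes to the first Betti number, since $\chi(X) = |VX| - |EX|$ and a vertex of degree $d$ forces many independent cycles or tree-branchings. Concretely, I would like to say that summing $(\deg(w) - 2)$ over all vertices relates to $-2\chi(X)$ plus leaf corrections, because $\sum_{v} \deg(v) = 2|EX|$ and $\sum_v 1 = |VX|$. Rearranging, $\sum_v (\deg(v) - 2) = 2|EX| - 2|VX| = -2\chi(X)$. Separating out leaves (degree $1$) and isolated vertices, the vertices of degree $\ge 2$ satisfy a bound in terms of $T(X)$, so the total excess degree concentrated at $W$ is linear in $T(X)$.

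The heart of the argument is therefore a careful counting inequality. I would bound $|EX|$ above by the number of matched edges, plus, for each matched-endpoint $w \in W$, the number of \emph{extra} edges at $w$, and show the latter sum is at most a constant multiple of $T(X)$. Each high-degree vertex either is an endpoint of a cycle-contributing configuration (hence paid for by $-\chi$) or sits on a path of leaves and degree-two vertices that can be contracted without changing $|EX| - M$ substantially. The constant $9$ strongly suggests that the bookkeeping is somewhat lossy and that one argues by bounding the number of vertices of degree $\ge 3$ by $\#(\text{leaves}) + 2b_1(X)$ (a standard tree/graph inequality), then noting each such vertex carries a bounded number of unmatched incident edges after the matched ones are removed.

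The main obstacle will be handling the unmatched edges cleanly without double-counting: an unmatched edge may have \emph{both} endpoints in $W$, and edges sharing a high-degree vertex must be apportioned carefully between the matching term and the topological term. I expect the cleanest route is to reduce to the case where $X$ has no vertices of degree $2$ (by suppressing them, which leaves $M$, the number of leaves, and $\chi$ controlled) and no isolated vertices, so that $|VX|$ and $|EX|$ are both bounded in terms of the number of leaves and $b_1$; the explicit constant then emerges from tracking how suppression of degree-two vertices and the maximality of the matching interact. This combinatorial reduction, rather than any deep idea, is where the real work lies.
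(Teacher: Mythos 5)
There is a genuine gap, and it sits exactly where you admit ``the real work lies'': the inequality tying $M(X)$ to $|EX|$ is never established, and the mechanism you propose for it provably cannot work. Your argument uses the matching only through the covering property ``every unmatched edge has an endpoint in $W$,'' i.e.\ through inclusion-maximality. That property is too weak to yield $|EX|\leq 2M(X)+9T(X)$: take $X$ a path with $n=30$ edges and match every third edge, $(v_{3i+1},v_{3i+2})$ for $0\leq i\leq 9$. This matching is inclusion-maximal (every unmatched edge shares an endpoint with a matched one), so every premise you invoke holds with $|W|=2\cdot 10$, yet $T(X)=2-1=1$ and $30 > 2\cdot 10 + 9$. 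Thus the lemma is really about the \emph{maximum} matching size, and any proof must at some point exhibit a concrete large matching rather than reason from maximality of an arbitrary one. Relatedly, your claim that each vertex of valence $\geq 3$ ``carries a bounded number of unmatched incident edges'' is false: the centre of a star $K_{1,n}$ carries $n-1$ of them. The correct accounting charges the total valence of such vertices to $T(X)$, via the identity $\sum_v(\val(v)-2)=-2\chi(X)$ that you wrote down but did not exploit in this way. Finally, your reduction step asserts that suppressing valence-$2$ vertices leaves $M$ ``controlled,'' but it does not: the long path collapses to a single edge and its matching number drops from $n/2$ to $1$.

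The paper's proof supplies precisely the missing step, and it is the opposite of your reduction: the dangerous edges are exactly those lying in the chains $S_1,\dots,S_r$ of valence-$2$ vertices (a long path is entirely such a chain, which is why it defeats your scheme), and these chains are where the large matching is found. Taking alternate edges inside each chain gives $M(X)\geq \sum_i\lfloor |ES_i|/2\rfloor\geq\sum_i(|ES_i|-1)/2$, whence $\sum_i|ES_i|\leq 2M(X)+r$. All other edges are incident to vertices of valence $\neq 2$ and number at most $\sum_{\val(v)\neq 2}\val(v)=2|EY|$, where $Y$ is the suppressed graph; and for $Y$ the Euler-characteristic/leaf count you sketched does close up: $|EY|=|VY|-\chi(Y)=T(Y)+\#\{v:\val(v)\geq 3\}\leq T(Y)+\tfrac{1}{3}\sum_{\val(v)\geq 3}\val(v)\leq T(Y)+\tfrac{2}{3}|EY|$, so $|EY|\leq 3T(Y)=3T(X)$. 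Combining, and using $r\leq |EY|$, one gets $|EX|\leq 2M(X)+r+2|EY|\leq 2M(X)+3|EY|\leq 2M(X)+9T(X)$. So while your degree-sum and suppression ingredients appear in the paper's argument, the heart of the lemma---the lower bound on $M(X)$ extracted from the valence-$2$ chains---is absent from your proposal, and no amount of bookkeeping around $W$ can substitute for it.
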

\begin{proof}
Let the full subgraph of $X$ spanned by valence 2 vertices have components $S_1,\ldots, S_r$, each of which is a line segment. Forming a matching solely from edges of the $S_i$ we find a bound
\[M(X) \geq \sum_i \left\lfloor \frac{|ES_i|}{2}\right\rfloor \geq \sum_i \frac{|ES_i|-1}{2}\]
and hence
\[2M(X)+r\geq \sum_i |ES_i| \geq |EX| - \sum_{\val(v)\neq 2} \val(v).\]

Now consider the graph $Y$ obtained from $X$ by ignoring the valence 2 vertices. Note that $T(Y)=T(X)$, $r\leq |EY|$ and that 
\[\sum_{v\in VX : \val(v)\neq 2} \val(v) = 2 |EY|.\]
Next, $|EY|$ is bounded in terms of $T(Y)$:
\begin{eqnarray*}
|EY| & = & |VY| - \chi(Y)\\
&=& T(Y) + |VY| - \#(\text{leaves}) \\
&=& T(Y) + |\{v : \val(v)\geq 3\}|\\
&\leq & T(Y) +  \frac{1}{3}\sum_{\val(v)\geq 3} \val(v)\\
&\leq & T(Y) + \frac{2}{3} |EY|.
\end{eqnarray*}
So finally we have
\[|EX| \leq 2M(X)+r+\sum_{\val(v)\neq 2} \val(v)\leq 2M(X) + 3|EY|\leq 2M(X)+9T(X). \]
\end{proof}

\begin{theorem}\label{thm:main}
Let $G$ be a finitely generated pro-$p$ group such that the module of ends ${\bf H}^1(G,\Fpof{G})$ is finitely generated as a right $G$-module. Then $G$ is accessible. In particular, if $(X,G_\bullet)$ is a finite reduced graph of groups decomposition of $G$ with finite edge groups then
\[|EX|\leq 2\dim_{{\sf Mod}\text{-}G}({\bf H}^1(G,\Fpof{G})) + 9(b_1(G)-1).\]
\end{theorem}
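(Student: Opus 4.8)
The plan is to reduce the accessibility bound to a counting problem about the graph $X$, and bound the relevant graph-theoretic quantities using the finite generation hypothesis on the module of ends together with the Mayer–Vietoris sequence. Given a reduced proper graph of groups decomposition $(X,G_\bullet)$ of $G$ with finite edge groups, Lemma \ref{lem_counting} tells me it suffices to bound $M(X)$, the size of a maximal matching, and $T(X) = \#(\text{leaves}) - \chi(X)$. I expect the term $9(b_1(G)-1)$ to account for $9T(X)$, and $2\dim_{{\sf Mod}\text{-}G}({\bf H}^1(G,\Fpof{G}))$ to account for $2M(X)$. So the two halves of the proof are: (i) bound $M(X)$ by $\dim_{{\sf Mod}\text{-}G}$ of the module of ends, and (ii) bound $T(X)$ by $b_1(G)-1$.

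\textbf{Bounding the matching via the module of ends.} The key idea for (i) is that each edge in a matching contributes an independent ``end-like'' direction. For a matching edge $e$ with endpoints $v$, I would collapse the rest of the graph to isolate the one-edge splitting across $e$ and feed it into the Mayer--Vietoris sequence. From Proposition \ref{prop_more} and its proof, the relevant segment
\[
0 \to \bigoplus_{v\in VX}{\bf H}^0(G_v,\Fpof{G}) \to \bigoplus_{e\in EX}{\bf H}^0(G_e,\Fpof{G}) \to {\bf H}^1(G,\Fpof{G})
\]
shows that the cokernel of the vertex-to-edge map injects into the module of ends. Each finite edge group $G_e$ contributes ${\bf H}^0(G_e,\Fpof{G}) \iso \Fpof{G_e\lqt G}$, a non-trivial $G$-module, while infinite vertex groups contribute $0$. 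The matching condition—that matched edges share no endpoint—should guarantee that the contributions of distinct matched edges map to an \emph{independent} direct sum inside (a quotient of) the module of ends. Then Lemma \ref{lem:directsumslocalring} forces $M(X) \leq \dim_{{\sf Mod}\text{-}G}({\bf H}^1(G,\Fpof{G}))$. The main obstacle lies here: I must carefully organize the Mayer--Vietoris data so that the images of different matched edges genuinely give a direct sum surjection onto non-trivial modules, handling the bookkeeping of which vertices are finite versus infinite, and arguing that no cancellation occurs between the edge contributions of a matching.

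\textbf{Bounding $T(X)$ by the first Betti number.} For (ii), I expect $b_1(G)$ to denote the minimal number of generators of $G^{\mathrm{ab}}\otimes\F_p$, equivalently $\dim_{\F_p}{\bf H}^1(G,\F_p)$. Since the graph of groups has finite edge groups and is reduced, a leaf vertex must carry an infinite vertex group that is strictly larger than its incident edge group, and each independent loop in $X$ (contributing to $\chi$) should contribute a stable letter $t_e$ that survives to a generator of $G$ modulo the Frattini subgroup. I would use the presentation of $\Pi_1({\cal G})$ to extract, from the $1-\chi(X) = b_1(X)$ independent loops and from the structure at leaves, a lower bound on the number of independent generators of $H_1(G;\F_p)$, yielding $T(X) \leq b_1(G) - 1$. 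This is the more routine half, relying on a Grushko-type count for pro-$p$ groups or direct inspection of the abelianization of the presentation. Combining the two bounds through Lemma \ref{lem_counting} gives the stated inequality and hence accessibility with explicit $n(G)$.
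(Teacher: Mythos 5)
Your overall skeleton---reduce to Lemma \ref{lem_counting} and bound $M(X)$ by $\dim_{{\sf Mod}\text{-}G}({\bf H}^1(G,\Fpof{G}))$ and $T(X)$ by $b_1(G)-1$---is exactly the paper's, and your part (ii) is essentially the paper's argument: the paper makes it precise by killing all non-leaf vertex groups, the kernels of homomorphisms $\phi_v\colon G_v\to\F_p$ annihilating the incident edge group at each leaf, and all edge groups, producing a surjection from $G$ onto $F_{1-\chi(X)}\amalg\coprod_{v\text{ leaf}}\F_p$, whence $b_1(G)\geq T(X)+1$. (One slip there: a leaf vertex group need not be infinite; reducedness gives only that it \emph{strictly} contains the image of the edge group, which is all the argument needs, since a proper closed subgroup of a pro-$p$ group lies in an open normal subgroup of index $p$.)

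Part (i), however, has a genuine gap, and it is at the heart of the theorem. You feed the \emph{degree-zero} portion of Mayer--Vietoris into the argument: the cokernel of $\bigoplus_v{\bf H}^0(G_v,\Fpof{G})\to\bigoplus_e{\bf H}^0(G_e,\Fpof{G})$ is the kernel of ${\bf H}^1(G,\Fpof{G})\to\bigoplus_v{\bf H}^1(G_v,\Fpof{G})$, i.e.\ a \emph{submodule} of the module of ends, not a quotient. Lemma \ref{lem:directsumslocalring} only controls \emph{surjections} onto direct sums (its proof is a Nakayama-type argument: each non-trivial module surjects $\F_p$), and an embedded direct sum of $m$ non-trivial submodules says nothing about generator number: $\Fpof{G}$ is far from Noetherian, and for $G$ free pro-$p$ of rank $2$ the cyclic module $\Fpof{G}$ already contains free submodules of arbitrarily large rank. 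Your hedge ``(a quotient of)'' cannot be realized from this segment of the sequence, and your alternative of collapsing everything \emph{except} a matched edge gives one-edge splittings one at a time, with no mechanism to make the $|M|$ contributions independent. The paper does the opposite collapse: it collapses each matching edge $e_i$, so that the resulting graph $(X',G'_\bullet)$ has vertex groups $G'_v=G_{d_0(e_i)}\amalg_{G_{e_i}}(G_{d_1(e_i)})$ with more than one end by Proposition \ref{prop_more}; since all edge groups are finite, ${\bf H}^1(G_e,\Fpof{G})=0$, so the \emph{degree-one} portion of Mayer--Vietoris for $(X',G'_\bullet)$ yields a genuine surjection
\[{\bf H}^1(G,\Fpof{G})\twoheadrightarrow\bigoplus_{v\in VX'}{\bf H}^1(G'_v,\Fpof{G})\]
with at least $|M|$ non-zero summands (non-vanishing of ${\bf H}^1(G'_v,\Fpof{G})$ follows from ${\bf H}^1(G'_v,\Fpof{G'_v})\neq 0$ via Proposition \ref{prop:infdirsums}). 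Only then does Lemma \ref{lem:directsumslocalring} apply as stated to give $M(X)\leq\dim_{{\sf Mod}\text{-}G}({\bf H}^1(G,\Fpof{G}))$; the independence of the matched-edge contributions, which you correctly identified as the main obstacle, is exactly what this surjection onto a direct sum provides for free.
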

\begin{proof}
Let $(X,G_\bullet)$ be a finite reduced graph of groups decomposition of $G$ with finite edge groups. Let $M$ be a maximal matching in $X$. By collapsing each $e_i\in M$ we obtain \cite[Proposition 2.8]{Wilkes19} a new graph of groups decomposition $(X',G'_\bullet)$ for $G$ with finite edge groups, such that each vertex $v\in X'$ deriving from an edge $e_i\in M$ has associated vertex group $G'_v = G_{d_0(e_i)} \amalg_{G_{e_i}} (G_{d_1(e_i)})$ with more than one end by Proposition \ref{prop_more}. The Mayer-Vietoris sequence corresponding to the new splitting $(X',G'_\bullet)$ then includes a surjection
\[{\bf H}^1(G,\Fpof{G}) \to \bigoplus_{v\in VX'}{\bf H}^1(G'_v,\Fpof{G})\to 0\]
where the right hand side has at least $|M|$ non-zero summands. It follows by Lemma \ref{lem:directsumslocalring} that 
\[M(X)=|M|\leq \dim_{{\sf Mod}\text{-}G}({\bf H}^1(G,\Fpof{G})).\]

We now also bound the quantity $T(X)$. If the vertex $v\in VX$ is a leaf, then there is a homomorphism $\phi_v\colon G_v\to \F_p$ which kills the single edge group adjacent to $v$. By killing all the vertex groups which are not leaves, the kernels of the $\phi_v$, and all edge groups, we find a surjection from $G$ to the fundamental group of a graph of groups which has vertex group $\F_p$ on each leaf and trivial groups otherwise. This fundamental group is readily seen to be
\[F_{1-\chi(X)} \amalg \coprod_{v \text{ leaf}} \F_p\]
where $F_r$ denotes a free pro-$p$ group of rank $r$. We deduce that
\[b_1(G) \geq \#(\text{leaves of }X) + 1-\chi(X) = T(X)+1.\]
The result now follows from Lemma \ref{lem_counting}.
\end{proof}

We observe that finite generation of the module of ends is not an obviously strictly stronger condition than accessibility: in the world of finitely generated discrete groups it is equivalent to accessibility by a theorem of Dunwoody \cite[Theorem 5.5]{Dunwoody79}. This theorem relies in an essential way on Stallings' Theorem on splittings of groups. We include the argument below for completeness, and in order to note that it would be valid for pro-$p$ groups if a pro-$p$ Stallings' Theorem were true---but such a theorem is not known at present.

\begin{prop}
Let $G$ be a finitely generated discrete group. If $G$ is accessible then $H^1(G,\F_2G)$ is finitely generated as a right $G$-module.
\end{prop}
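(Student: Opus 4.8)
The plan is to imitate the proof of Theorem \ref{thm:main}, run in reverse: accessibility will be used to produce an especially simple splitting of $G$, and finite generation of the module of ends will then be read off from the corresponding Mayer--Vietoris sequence. The essential ingredient that has no pro-$p$ analogue is Stallings' theorem on ends of discrete groups, which is what lets me recognise the vertex groups of such a splitting as having vanishing module of ends.

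First I would extract a terminal decomposition. Among all reduced graph of groups decompositions of $G$ with finite edge groups, accessibility bounds the number of edges, so I may choose one, say $(X,G_\bullet)$, maximising $|EX|$. I claim every vertex group $G_v$ has at most one end. Since $G$ is finitely generated and all edge groups are finite, each $G_v$ is finitely generated (a standard consequence of Bass--Serre theory). If some $G_v$ had more than one end, then by Stallings' theorem \cite{Stallings68} it would split nontrivially over a finite subgroup; refining $(X,G_\bullet)$ along this splitting would yield a reduced decomposition with strictly more edges---the new edge carries a finite group and is non-collapsible because the Stallings splitting is reduced---contradicting maximality. Hence every $G_v$ is finite or one-ended.

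Next I would compute the vertex contributions with coefficients in $\F_2 G$. For a finitely generated group $K$ that is finite or one-ended one has $H^1(K,\F_2 K)=0$: this is the definition of having at most one end in the one-ended case, and follows from Shapiro's lemma (induced equals coinduced) in the finite case. As $K$ is finitely generated it has type FP${}_1$, so $H^1(K,-)$ commutes with the direct sum in the decomposition $\F_2 G\iso\bigoplus_{K\lqt G}\F_2 K$ of $\F_2 G$ as a free left $\F_2 K$-module, giving $H^1(K,\F_2 G)=0$. Applying this to each vertex group, the discrete Mayer--Vietoris sequence---the analogue for ordinary cohomology of the sequence above, obtained by applying $\Ext^\bullet_{\F_2 G}(-,\F_2 G)$ to the chain complex of the Bass--Serre tree---contains
\[\bigoplus_{v\in VX}H^0(G_v,\F_2 G)\to\bigoplus_{e\in EX}H^0(G_e,\F_2 G)\xrightarrow{\beta}H^1(G,\F_2 G)\to\bigoplus_{v\in VX}H^1(G_v,\F_2 G)=0,\]
so $\beta$ is surjective. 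Each $G_e$ is finite, whence $H^0(G_e,\F_2 G)=N_{G_e}\F_2 G\iso\F_2[G_e\lqt G]$ is cyclic as a right $\F_2 G$-module. Therefore $\bigoplus_e H^0(G_e,\F_2 G)$ is generated by $|EX|$ elements, and so is its quotient $H^1(G,\F_2 G)$; in particular the module of ends is finitely generated, with $\dim_{{\sf Mod}\text{-}G}(H^1(G,\F_2 G))\leq|EX|$.

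The main obstacle is the first step: converting the numerical accessibility bound into an honest terminal decomposition. The delicate point is the bookkeeping in the refine-and-reduce operation---one must verify that splitting a multi-ended vertex group genuinely raises the edge count after the decomposition has been reduced again, rather than merely trading one edge for another (in particular, that no previously incident edge becomes collapsible once reattached to a smaller finite vertex group). This is also exactly where Stallings' theorem is indispensable: it is the implication ``does not split over a finite subgroup $\Rightarrow$ at most one end'' that forces the terminal vertex groups to have vanishing module of ends. The corresponding pro-$p$ statement is not available, which is why this converse remains out of reach in the pro-$p$ setting.
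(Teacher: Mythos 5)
Your argument is essentially the paper's own proof: take a reduced decomposition with finite edge groups maximising $|EX|$ (possible by accessibility), use Stallings' theorem to conclude each vertex group has at most one end so that $H^1(G_v,\F_2 G)=0$, and read off from the Mayer--Vietoris sequence a surjection $\bigoplus_{e\in EX}\F_2[G_e\lqt G]\twoheadrightarrow H^1(G,\F_2 G)$, giving finite generation with bound $|EX|$. The refine-and-reduce bookkeeping you flag is treated just as briefly in the paper (a parenthetical remark that the adjacent edge groups are finite and hence lie in vertex groups of the splitting of $G_v$), so your explicit treatment of finite vertex groups and of $H^1(K,\F_2 G)=0$ via FP${}_1$ merely elaborates the same argument rather than taking a different route.
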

\begin{proof}
Let $(X,G_\bullet)$ be a reduced graph of groups decomposition of $G$ with finite edge groups and with the maximum possible number of edges. Note that each vertex group is finitely generated. Each vertex group $G_v$ must be one-ended: else by Stalling's Theorem \cite{Stallings70} we could split $G_v$ over a finite subgroup, and thus find a graph of groups decomposition of $G$ with one additional edge (noting that all edge groups adjacent to $G_v$ are finite, and hence lie in a vertex group of the splitting of $G_v$). Therefore $H^1(G_v,\F_2G)=0$ for all $G_v$, and the Mayer-Vietoris sequence gives a surjection 
\[\bigoplus_{e\in EX} \F_2[G_e\lqt G] \twoheadrightarrow H^1(G,\F_2G)\]
whence $H^1(G,\F_2G)$ is finitely generated as a right $G$-module. 
\end{proof}

\bibliographystyle{alpha}
\bibliography{Accessibility}

\end{document}